\newtheorem{theorem}{Theorem}[]
\newtheorem{Lemma}{Lemma}
\newtheorem*{Stat}{Statement}
\theoremstyle{definition}
\newtheorem*{remark}{Remark}
\newtheorem{definition}{Definition}
\begin{document}

\title[Computing the Topological Degree of Maps Between 2-Spheres]{Computing the Topological Degree of Maps Between 2-Spheres}

\author[D. Kucher]{Daniil Kucher}
\date{\today}
\address{Address}
\email{dakucher@edu.hse.ru}
\maketitle

\let\thefootnote\relax
\footnotetext{MSC2020: Primary 00A05, Secondary 00A66.} 

\begin{abstract}

We describe an effective method for computing the topological degree of continuous functions $R:S^2 \to S^2$, where $S^2$ is the Riemann sphere. Our approach generalizes the degree formula for rational functions of complex polynomials, $\frac{f}{g}$, without common zeros. To apply our method, it is necessary to represent the function $R$ as the ratio of two continuous complex-valued functions $f$ and $g$ without common zeros. By using the Hopf fibration, this method reduces the problem to computing the winding number of a loop. This enables us to compute the degree of $\frac{f}{g}$ even when $f$ and $g$ are arbitrary continuous complex functions without common zeros, and the fraction has a limit at infinity (which can be finite or infinite). Specifically, if $f$ and $g$ are complex polynomials in $z$ and $\bar{z}$, and the highest-degree homogeneous component of the polynomial with the greater algebraic degree has a finite or infinite limit as $|z|\to\infty$, then the problem reduces to counting the roots of a complex polynomial inside the unit circle, obtained from this component.

\end{abstract}

$\,$

$\,$

\section{Introduction}

Consider the standard unit $n$-sphere $S^n \subset \mathbb{R}^{n+1}$ for $n \in \mathbb{Z}_{\ge 0}$ and the group $\pi_{n}(S^{n})$ of homotopy classes of continuous maps $f : S^n \to S^n$. This group is isomorphic to the group of integers, $\mathbb{Z}$, and the integer corresponding to the homotopy class of a map $f$ under this isomorphism is called the \textit{degree} of $f$. The identity map on $S^n$ is defined to have degree $1$.

In general, there is a standard method for computing the degree of a map. 
Let $R : S^{n} \to S^{n}$ be a continuous function, and let $p \in S^{n}$ be a \textit{regular} value. 
That is, there exists an open neighborhood $U$ of $p$ such that the preimage 
$R^{-1}(U)$ is a disjoint union of open sets $U_{\alpha}$, and $R$ restricted 
to each $U_{\alpha}$ is a homeomorphism $U_{\alpha} \to U$. 
For each $U_{\alpha}$, the restriction $R|_{U_{\alpha}}$ either preserves or 
reverses orientation. The degree of $R$ is then equal to the number of indices 
$\alpha$ for which $R|_{U_{\alpha}}$ preserves orientation, minus the number 
of indices $\beta$ for which $R|_{U_{\beta}}$ reverses orientation, noting that the number of preimages of $p$ is finite. 

However, the standard method may be difficult to apply even in the case $n=2$. For a map between Riemann spheres, defined by a rational function of $z$ and $\bar{z}$, finding the preimages of a point may require solving a system of high-degree polynomial equations. Furthermore, computing the sign of the Jacobian determinant at each solution can be computationally prohibitive. For example, let $R : S^{2} \to S^{2}$, where $S^{2}$ is identified with the Riemann sphere, 
be defined by the rational function
\[
R(z) = \frac{z\bar{z}^{4} + z\bar{z}^{2} + 3}{z^{3}\bar{z} + z},
\]  
with the convention that $R(p) = \infty$ if the denominator vanishes at $p$ or if $p = \infty$. 
Note that $R(z) \to \infty$ as $z \to \infty$, and that the numerator and denominator do not share any zeros; hence, the function is well-defined on $S^2$.
In Section~\ref{sec:computation}, we demonstrate how to compute the degree of this map and of other rational functions in $z$ and $\bar{z}$ effectively.

For the case $n=1$, several methods exist for computing the degree of a map. Consider the fundamental group $\pi_1(\mathbb{C} \setminus \{0\})$, which is isomorphic to $\mathbb{Z}$. We choose the isomorphism such that counter-clockwise loops correspond to positive integers. Then, for any continuous loop $\alpha: S^1 \to \mathbb{C} \setminus \{0\}$, we can define its \textit{winding number} (or \textit{index}) with respect to the origin as the 
image of this loop under this isomorphism. If the loop is smooth, the winding number is given by the integral formula:

\begin{equation}
\text{ind}_0(\alpha) = \frac{1}{2\pi i} \oint_\alpha \frac{dz}{z} = \frac{1}{2\pi i} \int_0^{2\pi} \frac{\alpha'(t)}{\alpha(t)}dt \quad \label{eq:formula1}
\end{equation}

By embedding $S^1$ as the unit circle in $\mathbb{C} \setminus \{0\}$, we observe that the degree of any continuous map $S^{1} \rightarrow S^{1}$ coincides with its winding number.

In certain cases, formulas for the degree of 1-dimensional maps are particularly simple \cite{KhovanskiiBurda2008}. In this paper, we will demonstrate how to compute the degrees of maps from $S^2$ to $S^2$ by reducing the problem to computing the winding numbers of loops in $\mathbb{C} \setminus \{0\}$.

For the following theorem, we identify the sphere $S^2$ with the Riemann sphere $\bar{\mathbb{C}} = \mathbb{C} \cup \{\infty\}$ via stereographic projection.

\begin{theorem}
\label{Theorem1}

Let $f, g: \mathbb{C} \to \mathbb{C}$ be continuous functions with no common zeros. Assume that $\lim\limits_{|z|\to\infty} \frac{f(z)}{g(z)} = \infty$ and that $f(z) \ne 0$ for all $|z| \geq M$ for some constant $M > 0$. Consider the continuous map $R: \bar{\mathbb{C}} \to \bar{\mathbb{C}}$ defined as follows:
$$
R(z) = 
\begin{cases}
\frac{f(z)}{g(z)} & \text{if } g(z) \ne 0 \\
\infty & \text{if } g(z) = 0 \\
\infty & \text{if } z = \infty
\end{cases}
$$
Then, the degree of the map $R$ is equal to the winding number of the loop $\tilde{f}: S^1 \to \mathbb{C} \setminus \{0\}$, given by:
$$ \tilde{f}(\phi) = f(Me^{i\phi}) $$
\end{theorem}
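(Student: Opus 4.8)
\emph{Proof sketch.}\ The plan is to localize $\deg R$ near the zero set of $f$ — where $R$ and $f$ differ only by a non-vanishing factor — and then to read off the answer as a planar winding number. To begin, observe that because $f$ and $g$ have no common zeros and $f(z)\ne 0$ for $|z|\ge M$, the set $Z:=R^{-1}(0)$ coincides with $f^{-1}(0)$, is compact, lies in the open disk $\Omega_M:=\{|z|<M\}$, and is disjoint from $g^{-1}(0)$. Moreover $R$ maps the compact set $\{|z|\ge M\}\cup\{\infty\}$ into $\bar{\mathbb C}\setminus\{0\}$, hence into the complement of a closed round disk about $0$; in particular $0\notin R(\partial\Omega_M)$.

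Next I would invoke the standard properties of the topological (Brouwer) degree. Since every preimage of $0$ under $R$ lies in $\Omega_M$, the degree of $R\colon S^2\to S^2$ equals the Brouwer degree $\deg(R,\Omega_M,0)$ computed in the affine chart $\mathbb C=\bar{\mathbb C}\setminus\{\infty\}$ about $0$; the fact that $R$ attains the value $\infty$ where $g$ vanishes is irrelevant to a degree over the value $0$ and can be excised. Choosing a compact surface-with-boundary neighborhood $\bar U$ of $Z$ with $\bar U\subset\{g\ne 0\}\cap\Omega_M$, excision gives $\deg(R,\Omega_M,0)=\deg(R,U,0)$ and, in the same way, $\deg(f,\Omega_M,0)=\deg(f,U,0)$; on $\bar U$ we have $R=f/g$ with $g$ non-vanishing.

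Now I would discard the factor $1/g$. Using the identification of the planar Brouwer degree with the winding number of the boundary, $\deg(R,U,0)=\text{ind}_0(R|_{\partial U})$ and $\deg(f,U,0)=\text{ind}_0(f|_{\partial U})$, where for a map on the (possibly disconnected) curve $\partial U$ one sums winding numbers over the coherently oriented boundary circles. Multiplicativity of the winding number gives $\text{ind}_0(R|_{\partial U})=\text{ind}_0(f|_{\partial U})-\text{ind}_0(g|_{\partial U})$, and $\text{ind}_0(g|_{\partial U})=0$ because $g$ maps all of $\bar U$ into $\mathbb C\setminus\{0\}$ while $[\partial U]$ bounds in $\bar U$. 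Hence $\deg R=\deg(f,\Omega_M,0)$, and one last application of the boundary-winding-number identity — this time to $f$ on the disk $\Omega_M$, legitimate since $f\ne 0$ on $|z|=M$ — yields $\deg(f,\Omega_M,0)=\text{ind}_0(f|_{\{|z|=M\}})=\text{ind}_0(\tilde f)$, which is the assertion.

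The step I expect to be the main obstacle is the reduction in the second paragraph: making rigorous the passage from the homotopy-theoretic degree of $R\colon S^2\to S^2$ to a planar Brouwer degree, given that $R$ is merely continuous and that the target point $\infty$ may have preimages scattered throughout $\Omega_M$. One has to use the compactness of $\{|z|\ge M\}\cup\{\infty\}$ to be sure that the preimages of $0$ are genuinely confined to a bounded region, and then match the two notions of degree through a chart by means of excision. By comparison, dropping the factor $1/g$ is painless once everything is phrased through winding numbers, since the relevant boundary cycle bounds precisely where $g$ has no zeros. (One could instead argue entirely by homotopy: $R$ already maps $D':=\{|z|\ge M\}\cup\{\infty\}$ into the contractible set $\bar{\mathbb C}\setminus\{0\}$, so $R|_{D'}$ may be homotoped to a constant and $D'$ collapsed; but the winding-number bookkeeping above seems cleaner.)
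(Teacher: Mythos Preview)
Your argument is correct and follows a genuinely different route from the paper's main proof. The paper lifts $R$ through the Hopf fibration $h:S^3\to S^2$: after a preliminary homotopy that makes $\arg f$ depend only on $\arg z$ for $|z|\ge M$, it writes down an explicit lift $\tilde R:D^2\to S^3$, $\tilde R=(f_1,g)/\sqrt{|f_1|^2+|g|^2}$, and then invokes the isomorphism $\pi_2(S^2)\cong\pi_1(\text{fiber})$ coming from the long exact sequence of the pair $(S^3,\text{fiber})$ to identify $\deg R$ with the class of the boundary loop $\tilde R|_{\partial D^2}$, which is visibly $\text{ind}_0(\tilde f)$. Your approach instead localizes over the target value $0$ via Brouwer degree and boundary winding numbers, and is more elementary in that it avoids fibration theory entirely; the step you flag as the main obstacle is indeed the only one needing care, and it goes through once you excise to your $U$, where $R$ lands in the affine chart $\mathbb C$. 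Interestingly, the paper also sketches (in a Remark) a local-degree alternative, but it localizes over $\infty$ rather than $0$; that choice forces smoothness and transversality hypotheses on $f,g$ and an appeal to the argument principle to sum the local indices of $g$, whereas your choice of $0$ sidesteps all of that, since $R^{-1}(0)=f^{-1}(0)$ and the factor $1/g$ drops out for free on $\partial U$. What the Hopf-fibration proof buys in return is a cleaner conceptual picture that dovetails with the paper's Lemma~1, showing every continuous $R$ arises as such a ratio $f/g$.
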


The winding number of a loop can be calculated effectively using formula \eqref{eq:formula1} and numerical integration.

In particular, when $f(z,\bar{z})$ and $g(z,\bar{z})$ are complex polynomials in $z$ and $\bar{z}$, the problem reduces to counting the roots of a complex polynomial in the variable $z$
inside the open unit disk, under the assumption that the highest-degree homogeneous component of the polynomial with the larger degree (either \(f\) or \(g\)) tends to infinity as \(|z|\to\infty\). For this purpose, geometric methods for counting the roots of complex polynomials in closed regions may also be applied \cite{GarciaZapata2014}.

\begin{theorem}
\label{Theorem2}
Let $R(z) = \frac{f(z,\bar{z})}{g(z, \bar{z})}$ be a rational function where $f(z,\bar{z})$ and $g(z,\bar{z})$ are polynomials in $z$ and $\bar{z}$ with no common zeros. Assume, without loss of generality, that $\deg(f) \ge \deg(g)$. Let $d = \deg(f)$, and let $T(z, \bar{z})$ be the homogeneous polynomial of degree $d$ from the highest-degree terms of $f(z,\bar{z})$.

Then the degree of the map $R: \bar{\mathbb{C}} \to \bar{\mathbb{C}}$ is equal to the number of roots of the polynomial $\tilde{T}(z) = z^d T(z, z^{-1})$ with modulus less than 1, counted with multiplicity, minus $d$.

Equivalently, the degree of the map is equal to the index of the curve
$$[0, 2\pi] \to \mathbb{C} \setminus \{0\}: \alpha \mapsto T(e^{i\alpha}, e^{-i\alpha})$$
\end{theorem}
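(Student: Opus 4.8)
The plan is to derive Theorem~\ref{Theorem2} from Theorem~\ref{Theorem1}: for $|z|$ large the polynomial $f$ is dominated by its top homogeneous component $T$, so the loop $\tilde f$ furnished by Theorem~\ref{Theorem1} is homotopic in $\mathbb{C}\setminus\{0\}$ to the leading-term loop $\phi\mapsto T(e^{i\phi},e^{-i\phi})$, whose winding number is then turned into a root count by the argument principle. A short preliminary reduction lets us assume $\deg f>\deg g$, equivalently $R(\infty)=\infty$: if instead $R(\infty)=\lambda\in\mathbb{C}$, then necessarily $\deg g=d$, and continuity of $R$ at $\infty$ forces the degree-$d$ component of $f$ to equal $\lambda$ times that of $g$, so composing $R$ with the degree-one M\"obius map $w\mapsto 1/(w-\lambda)$ replaces $f/g$ by $g/(f-\lambda g)$, a function with the same degree as $R$, whose numerator has degree $d$ and denominator degree $<d$, and whose $\tilde T$ differs from the original one only by the nonzero factor $1/\lambda$ (hence has the same zeros with the same multiplicities). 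So assume $\deg f>\deg g$. Write $f=T+L$, where $L:=f-T$ is a polynomial in $z,\bar z$ all of whose monomials have degree $\le d-1$, and put $t(\phi):=T(e^{i\phi},e^{-i\phi})$, so that $T(Me^{i\phi},Me^{-i\phi})=M^{d}t(\phi)$ by homogeneity; the hypothesis stated in the Introduction, that $T$ tends to $\infty$ as $|z|\to\infty$, is for $d\ge1$ exactly the statement $c:=\min_{\phi}|t(\phi)|>0$. Note that $\tilde T(z)=z^{d}T(z,z^{-1})=\sum_{j=0}^{d}a_{j,d-j}z^{2j}$ (the $a_{jk}$ being the coefficients of $T$) is a genuine polynomial, not identically zero since $\deg f=d$, and that $\tilde T(e^{i\phi})=e^{id\phi}t(\phi)$, so $\tilde T$ has no zero on $|z|=1$.

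Next I would check that Theorem~\ref{Theorem1} applies and extract the relevant winding number. Continuity and the absence of common zeros hold by hypothesis. For $|z|=r\ge1$ we have $|L(z,\bar z)|\le Cr^{d-1}$ and $|T(z,\bar z)|=r^{d}|t(\arg z)|\ge cr^{d}$, so $|f(z)|\ge r^{d-1}(cr-C)>0$ once $r>C/c$; fixing $M>C/c$ makes $f$ nonvanishing on $\{|z|\ge M\}$. Since $\deg g<d$ gives $|g(z)|\le C'r^{\deg g}$, it follows that $|f(z)/g(z)|\ge r^{\,d-1-\deg g}(cr-C)/C'\to\infty$, while $R(z)=\infty$ wherever $g(z)=0$; hence $\lim_{|z|\to\infty}R(z)=\infty$ in $\bar{\mathbb{C}}$, and Theorem~\ref{Theorem1} gives $\deg R=\text{ind}_0(\tilde f)$ with $\tilde f(\phi)=f(Me^{i\phi})$. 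Now the map $H(s,\phi)=M^{d}t(\phi)+s\,L(Me^{i\phi},Me^{-i\phi})$, $s\in[0,1]$, satisfies $|H(s,\phi)|\ge M^{d-1}(cM-C)>0$ for $M>C/c$, and joins $\tilde f$ to $M^{d}t(\cdot)$ inside $\mathbb{C}\setminus\{0\}$, so $\deg R=\text{ind}_0(\phi\mapsto M^{d}t(\phi))=\text{ind}_0(\phi\mapsto T(e^{i\phi},e^{-i\phi}))$ — which is precisely the alternative (index-of-the-curve) formula in the statement, and also shows the index does not depend on the admissible choice of $M$.

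It remains to convert this index into a root count. On $|z|=1$ the identity $\tilde T(e^{i\phi})=e^{id\phi}t(\phi)$ exhibits $\phi\mapsto\tilde T(e^{i\phi})$ as the pointwise product, in $\mathbb{C}\setminus\{0\}$, of $\phi\mapsto e^{id\phi}$ (of index $d$) with $\phi\mapsto t(\phi)$; since winding numbers of pointwise products add, $\text{ind}_0(\phi\mapsto\tilde T(e^{i\phi}))=d+\deg R$. As $\tilde T$ is a polynomial with no zero on $|z|=1$, the argument principle — equivalently, formula~\eqref{eq:formula1} applied to $z\mapsto\tilde T(z)$ as $z$ traverses the unit circle — identifies this index with the number $N$ of zeros of $\tilde T$ in the open unit disk, counted with multiplicity. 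Combining the two, $\deg R=N-d$, as asserted.

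The step I expect to be the main obstacle, and the only one where the hypothesis on $T$ is genuinely used, is the domination estimate $|H(s,\phi)|>0$. If $t(\phi_0)=0$ for some $\phi_0$, the lower-order part $L$ can overwhelm $f$ along the ray $\arg z=\phi_0$, so $\tilde f$ need not be homotopic to $M^{d}t(\cdot)$; moreover the curve $\alpha\mapsto T(e^{i\alpha},e^{-i\alpha})$ would then leave $\mathbb{C}\setminus\{0\}$ and $\tilde T$ would acquire a zero on the unit circle, invalidating the concluding application of the argument principle. Everything else — verifying the hypotheses of Theorem~\ref{Theorem1}, the additivity of winding numbers under pointwise multiplication, and the argument principle for a polynomial — is routine.
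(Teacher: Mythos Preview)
Your argument is correct and follows essentially the same route as the paper: both show $T$ has no zeros on the unit circle, apply Theorem~\ref{Theorem1}, replace $f$ by its top part $T$ via a Rouch\'e-type homotopy, and then relate $\operatorname{ind}_0\bigl(T(e^{i\alpha},e^{-i\alpha})\bigr)$ to the zeros of $\tilde T$ inside the unit disk. The only cosmetic differences are that you spell out the reduction from $\deg f=\deg g$ to $\deg f>\deg g$ via a M\"obius map (which the paper relegates to a remark in the Introduction) and that you invoke the argument principle directly where the paper factors $\tilde T$ and homotopes each linear factor.
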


If the function $\frac{f}{g}$ has a finite limit $a$ at infinity, we can then compose it with the Möbius transformation $z \mapsto \frac{1}{z-a}$. This transformation is an automorphism of the Riemann sphere and has a degree of 1, so the composition does not change the degree of the function $\frac{f}{g}$.

\section*{Acknowledgements}
The author is grateful to his advisor, Vladlen Timorin, for valuable guidance and for suggesting the problem that initiated this research, and to Sergey Melikhov and Alexey Gorinov for helpful discussions and comments that contributed to clarifying several ideas presented in the paper.

\section{Proofs}

We identify the sphere \(S^2\) with the Riemann sphere $\bar{\mathbb{C}} = \mathbb{C} \cup \{\infty\}$, via stereographic projection. The sphere can also be viewed as the disk $D^2$ with its boundary collapsed to a single point. For convenience, in the case where a continuous function $R : \overline{\mathbb{C}} \to \overline{\mathbb{C}}$ is given as the ratio of two continuous complex-valued functions $f$ and $g$, we will write
\[
R(z) = \frac{f(z)}{g(z)},
\]
with the understanding that this convention also covers the cases when the denominator vanishes or when $z = \infty$.

\begin{definition}
Consider the set $\tilde{S} = \{\tilde{x} \in \mathbb{C} \mid |\tilde{x}| = 1\}$, which is a disjoint copy of the unit circle, where $\tilde{x}$ denotes the copy of a point $x \in S^1$. Let $D^{2} = \{x \in \mathbb{C} \mid |x| \leq 1\}$ be the closed unit disk.  
We define a topology on the set $\mathbb{C} \cup \tilde{S}$ so that the following map $f: D^2 \to \mathbb{C} \cup \tilde{S}$ is a homeomorphism:

$$
f(x) =
\begin{cases}
e^{i\arg(x)} \tan(\frac{\pi}{2}|x|), & |x| < 1 \\
\tilde{x}, & |x| = 1
\end{cases}
$$
where $\arg(x)$ is the argument of the point $x$, measured counterclockwise. We refer to the space $\mathbb{C} \cup \tilde{S}$ as \textit{plane with a boundary}.
\end{definition}

First, we prove that our upcoming result applies to any continuous function $R: \bar{\mathbb{C}} \to \bar{\mathbb{C}}$:
\begin{Lemma}
    Let $R: \bar{\mathbb{C}} \to \bar{\mathbb{C}}$ be a continuous function. Then there exist continuous functions $f,g: \mathbb{C} \to \mathbb{C}$ that do not have any common zeros, such that

    $$
        R = \frac{f}{g}
    $$
    on the domain where $g$ is non-zero.
\end{Lemma}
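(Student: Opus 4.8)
The plan is to realize $R$, restricted to $\mathbb{C}$, in homogeneous coordinates on the sphere, i.e.\ to write $R(z)=[f(z):g(z)]$ with the pair $(f(z),g(z))$ never equal to $(0,0)$. First I would fix the identification $\bar{\mathbb{C}}\cong\mathbb{CP}^1$ sending $w\in\mathbb{C}$ to $[w:1]$ and $\infty$ to $[1:0]$, under which the class $[a:b]$ is the point $a/b\in\mathbb{C}$ when $b\ne 0$ and is $\infty$ when $b=0$. The key object is the bundle projection
$$p:\mathbb{C}^2\setminus\{0\}\to\mathbb{CP}^1,\qquad p(a,b)=[a:b],$$
which, after restricting the total space to the unit sphere $S^3$, is exactly the Hopf fibration referred to in the abstract. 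It then suffices to produce a continuous lift $(f,g):\mathbb{C}\to\mathbb{C}^2\setminus\{0\}$ of $R|_{\mathbb{C}}:\mathbb{C}\to\mathbb{CP}^1$ through $p$: for then $f,g:\mathbb{C}\to\mathbb{C}$ are continuous, the condition $(f(z),g(z))\ne(0,0)$ shows that they have no common zero, and for every $z$ with $g(z)\ne 0$ the equality $p(f(z),g(z))=[f(z):g(z)]=f(z)/g(z)$ gives $R(z)=f(z)/g(z)$; moreover $g(z)=0$ forces $f(z)\ne 0$ and $R(z)=\infty$, which matches the convention $R=f/g$ used throughout the paper.

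To construct the lift I would exploit that $\mathbb{C}$ is contractible, so that $R|_{\mathbb{C}}$ is null-homotopic; an explicit null-homotopy is $H:\mathbb{C}\times[0,1]\to\mathbb{CP}^1$, $H(z,s)=R(sz)$, with $H(\,\cdot\,,0)\equiv R(0)$ constant and $H(\,\cdot\,,1)=R|_{\mathbb{C}}$. Choose any $e_0\in\mathbb{C}^2\setminus\{0\}$ with $p(e_0)=R(0)$; the constant map $z\mapsto e_0$ is a lift of $H(\,\cdot\,,0)$. Since $p$ is a fiber bundle over the paracompact base $\mathbb{CP}^1$ it is a fibration, hence has the homotopy lifting property, which yields a continuous $\widetilde{H}:\mathbb{C}\times[0,1]\to\mathbb{C}^2\setminus\{0\}$ with $p\circ\widetilde{H}=H$ and $\widetilde{H}(\,\cdot\,,0)\equiv e_0$. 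Setting $(f,g):=\widetilde{H}(\,\cdot\,,1)$ gives the desired lift of $R|_{\mathbb{C}}$, and the bookkeeping of the previous paragraph finishes the proof.

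I do not expect a genuine obstacle: the only non-elementary input is the homotopy lifting property of $p$ — equivalently, the fact that a fiber bundle over the contractible, paracompact base $\mathbb{C}$ is trivial — which is standard, and everything else is unwinding the identification $\bar{\mathbb{C}}\cong\mathbb{CP}^1$. For completeness one can note that whenever $R|_{\mathbb{C}}$ omits a value the lemma is elementary: if it omits $\infty$, take $f=R|_{\mathbb{C}}$ and $g\equiv 1$; if it omits a finite value $a$, then $h:=1/(R-a):\mathbb{C}\to\mathbb{C}$ is continuous and $R=(ah+1)/h$, so $f=ah+1$, $g=h$ work. Since $R|_{\mathbb{C}}$ can nevertheless be surjective onto $\bar{\mathbb{C}}$ (this happens already for suitable rational $R$ whose pole set lies in $\mathbb{C}$ and for which $\infty$ is not a critical point), the topological argument above is genuinely needed in general.
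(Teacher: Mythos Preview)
Your proof is correct and follows essentially the same route as the paper: both lift $R$ restricted to the (contractible) plane through the Hopf-type fibration to obtain a pair $(f,g)$ with no common zero. The only cosmetic differences are that the paper lifts into $S^3$ rather than $\mathbb{C}^2\setminus\{0\}$, works over its ``plane with a boundary'' $D^2$ rather than $\mathbb{C}$ directly, and cites a reference for the existence of the lift where you spell out the homotopy lifting property argument explicitly.
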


\begin{proof} Consider the Hopf fibration $h: S^3 \to S^2$. From the local triviality of this fibration it follows \cite[p.~107]{Fomenko} that any continuous map $R: S^2 \to S^2$ admits a \textit{lifting}, that is, a continuous map $\tilde{R}: D^2 \to S^3$ such that 
$$
h \circ \tilde{R} = R \circ \pi,
$$ 
where $D^2$ is the plane with a boundary and $\pi: D^2 \to S^2$ is the canonical quotient map obtained by collapsing the boundary of $D^2$ to a point.

The sphere $S^3$ is defined as the set of pairs $\{(z_1, z_2) \in \mathbb{C}^2 \mid |z_1|^2+|z_2|^2=1\}$. We denote the lifting as $\tilde{R}(p) = (R_1(p), R_2(p))$. According to the definition of the fibration, the original map $R$ is the ratio $R = \frac{R_1}{R_2}$.
 The condition 
 $$|R_1(p)|^2+|R_2(p)|^2=1$$
 ensures that $R_1$ and $R_2$ do not have any common zeroes. Note that $R_1$ and $R_2$ are continuous functions from $D^2$ to $\mathbb{C}$. By restricting these maps to the interior of $D^{2}$, we define
\[
f := R_{1}|_{\operatorname{int}(D^{2})}, \qquad 
g := R_{2}|_{\operatorname{int}(D^{2})}.
\]

\end{proof}

We proceed to the proof of Theorem 1. For this, we will need the following definition:

\begin{definition}
Let $A$ be a subspace of a topological space $X$, and let $x_0$ be a base point in $A$. The relative homotopy group $\pi_2(X,A,x_0)$ is the set of homotopy classes of maps $f$ from the square $I^2 = [0,1] \times [0,1]$ to $X$ such that $f(\partial I^2) \subset A$ and $f(\{0,1\} \times [0,1] \cup [0,1] \times \{0\}) = \{x_0\}$, where $\partial I^2$ is the boundary of $I^2$. An element of this group is called \textit{a cubic relative spheroid}. The group operation is defined as follows:
$$ (f+g)(t_1,t_2) = \begin{cases} f(2t_1,t_2), & 0 \le t_1 \le 1/2 \\ g(2t_1-1,t_2), & 1/2 \le t_1 \le 1 \end{cases} $$
\end{definition}

Alternatively, we can define this group using maps from the disk $D^2 = \{z \in \mathbb{C} \mid |z| \le 1\}$ to $X$ such that $f(\partial D^2) \subset A$ and $f(S_l) = \{x_0\}$, where $S_l = \{z \in \partial D^2 \mid Re(z) \le 0\}$ is the left semicircle. An element of the group defined via the disk is called \textit{a relative ball spheroid}.

\begin{Stat}
The operation $+$ defined above satisfies all group axioms. \textup{\cite[p.~97]{Fomenko}} 

\end{Stat}

In the following, let $D^{2}$ be the closed unit disk. We consider the sphere $S^2$ to be the quotient space $D^2/\partial D^2$, which is formed by collapsing the boundary $\partial D^2$ to a single point. This point serves as the basepoint of the sphere, denoted $x_0 = [\partial D^2]$. Any continuous map $f:(D^2,\partial D^2)\to(S^2,x_0)$ that sends the boundary $\partial D^2$ to the basepoint $x_0$ can be viewed as a map from $S^2$ to $S^2$ and represents an element of $\pi_2(S^2,x_0)$.

\begin{Lemma}
Let $p : S^3 \to S^2$ be a locally trivial fibration with fiber $T = p^{-1}(x_0)$ over the basepoint $x_0 \in S^2$. Let $y_0 \in T$ be a basepoint for the fiber. Consider a map $f : (D^2, \partial D^2) \to (S^2, x_0)$ representing a class in $\pi_2(S^2, x_0)$. Let $\tilde{f} : D^2 \to S^3$ be a lifting of $f$ (i.e., $p \circ \tilde{f} = f$) such that $\tilde{f}(d_0) = y_0$, where $d_0$ is a chosen basepoint in $\partial D^2$ and also in $D^2$. Then the mapping
$$
[f] \mapsto [\tilde{f}|_{\partial D^2}]
$$
is a well-defined isomorphism from $\pi_2(S^2, x_0)$ to $\pi_1(T, y_0)$.
\end{Lemma}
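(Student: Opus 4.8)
The plan is to identify the assignment $[f]\mapsto[\tilde f|_{\partial D^2}]$ with the connecting homomorphism $\partial\colon \pi_2(S^2,x_0)\to\pi_1(T,y_0)$ of the fibration $p$, and then to use that $S^3$ is $2$-connected, i.e.\ $\pi_1(S^3)=\pi_2(S^3)=0$. I would, however, give the argument directly, resting it on a single tool: the lifting extension property of $p$. Since $D^2$ and $D^2\times I$ are contractible, every map into $S^2$ from such a space lifts to $S^3$; more generally, if $A\subset D^2\times I$ is a subspace whose inclusion is a cofibration and a homotopy equivalence, then a lift prescribed on $A$ extends over all of $D^2\times I$. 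In particular, because the fiber $T$ is path-connected, each $f\colon(D^2,\partial D^2)\to(S^2,x_0)$ has a lift $\tilde f\colon D^2\to S^3$ with $\tilde f(d_0)=y_0$; since $f(\partial D^2)=x_0$ this forces $\tilde f(\partial D^2)\subset p^{-1}(x_0)=T$, so $\tilde f|_{\partial D^2}$ is indeed a loop in $T$ based at $y_0$, and the assignment makes sense as a first step.

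Next I would establish well-definedness, treating simultaneously the independence of the chosen lift and the invariance under homotopies of $f$. Both reduce to the following: given a map $F\colon D^2\times I\to S^2$ that is constant $x_0$ on $\partial D^2\times I$, together with a partial lift of $F$ defined on the contractible subspace formed by the bottom disk $D^2\times\{0\}$ and the segment $\{d_0\}\times I$ (and, when comparing two lifts of the same $f$, also the top disk $D^2\times\{1\}$), extend it to a lift $\tilde F\colon D^2\times I\to S^3$. Restricting $\tilde F$ to $\partial D^2\times I$ then gives a homotopy of loops that stays inside $T$ — because $F$ is $x_0$ there — and fixes the basepoint $y_0$ — because the partial lift does along $\{d_0\}\times I$; this is exactly what is needed to conclude that two lifts of $f$, or two homotopic choices of $f$, produce the same element of $\pi_1(T,y_0)$.

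For the homomorphism property it is cleanest to pass to the cubic model, representing classes by $f,g\colon(I^2,\partial I^2)\to(S^2,x_0)$ and forming $f+g$ as in the definition. I would lift $f$ and $g$ so that each lift is constant $y_0$ on the three edges $\{0,1\}\times I\cup I\times\{0\}$ (possible by the lifting extension property, these edges mapping to $x_0$). Then both lifts send the middle edge $\{1/2\}\times I$ — along which $f+g$ is glued — to $y_0$, so the two lifts glue to a lift of $f+g$ that is again constant $y_0$ on the corresponding three edges; reading off the top edge shows this lift restricts there to the concatenation of $\tilde f|_{\mathrm{top}}$ with $\tilde g|_{\mathrm{top}}$, giving $\partial[f+g]=\partial[f]\,\partial[g]$ after translating back to the disk model (where $S_l$ is sent to $y_0$ and the image loop is carried by the right semicircle).

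Finally, bijectivity is where $2$-connectedness of $S^3$ enters. For surjectivity, given a based loop $\gamma$ in $T$, regard it as a loop in $S^3$; since $\pi_1(S^3)=0$ it extends to $\tilde f\colon D^2\to S^3$ with $\tilde f|_{\partial D^2}=\gamma$ and $\tilde f(d_0)=y_0$, and then $f:=p\circ\tilde f$ sends $\partial D^2$ into $p(T)=\{x_0\}$, so $[f]\in\pi_2(S^2,x_0)$ maps to $[\gamma]$. For injectivity, if $\tilde f|_{\partial D^2}$ is null-homotopic in $T$ rel $d_0$ via some $H\colon\partial D^2\times I\to T$, glue $\tilde f$ and $H$ along $\partial D^2$ to get a map $\hat f$ out of the disk-with-collar — which is again a disk — that is constant $y_0$ on its boundary; since $\pi_2(S^3)=0$, $\hat f$ is null-homotopic rel boundary, and composing with $p$, while noting that $p\circ H$ is constant $x_0$ so that $p\circ\hat f$ represents the same class as $f$, forces $[f]=0$. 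The step I expect to be the main obstacle is the precise form of the lifting extension property: one must verify that the subspaces of $D^2\times I$ appearing above include as cofibrations that are homotopy equivalences, so that partial lifts along them extend — a routine but slightly delicate point-set check; everything else is basepoint bookkeeping, and this obstacle can be bypassed altogether by quoting the long exact homotopy sequence of $p$.
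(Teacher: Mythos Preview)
Your proposal is correct and, as you yourself note at the end, it unpacks by hand what the paper obtains by quoting exact sequences. The paper's route is: first invoke the isomorphism $p_*\colon \pi_2(S^3,T,y_0)\to\pi_2(S^2,x_0)$ induced by the fibration, then use the long exact sequence of the pair $(S^3,T)$ together with $\pi_1(S^3)=\pi_2(S^3)=0$ to see that the boundary map $k\colon\pi_2(S^3,T,y_0)\to\pi_1(T,y_0)$ is an isomorphism, and finally check that an arbitrary lift $\tilde f$ (not a priori a relative spheroid, since only $\tilde f(d_0)=y_0$ is assumed) can be homotoped rel basepoint and rel the condition $\partial D^2\to T$ into a genuine relative ball spheroid, so that $[\tilde f|_{\partial D^2}]=k\circ p_*^{-1}[f]$ regardless of the lift chosen. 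Your approach instead verifies well-definedness, the homomorphism property, surjectivity, and injectivity separately via explicit lifting-extension arguments over $D^2\times I$, never introducing the relative group $\pi_2(S^3,T)$. What the paper's packaging buys is brevity and the avoidance of the cofibration checks you flag as the main obstacle; what your direct argument buys is self-containedness and a clearer picture of why each piece holds, at the cost of more bookkeeping. Both arguments rest on the same two facts, $\pi_1(S^3)=0$ and $\pi_2(S^3)=0$, used in exactly the places you identify.
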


\begin{proof}Define a map $p_*: \pi_2(S^3, T, y_0) \to \pi_2(S^2, x_0)$ by $p_*([g]) = [p \circ g]$. It is a known result that $p_*$ is an isomorphism (\cite{Fomenko} pp. 116-117).

Consider the homotopy sequence for the pair $(S^3, T)$  with basepoint $y_0$:
$$ \dots \to \pi_2(S^3, y_0) \xrightarrow{i_*} \pi_2(S^3, T, y_0) \xrightarrow{k} \pi_1(T, y_0) \xrightarrow{j_*} \pi_1(S^3, y_0) \to \dots $$
Here, $i_*$ is induced by the inclusion $(S^3, y_0) \hookrightarrow (S^3, T)$, $k$ is the connecting homomorphism that restricts a relative spheroid to its boundary, and $j_*$ is induced by the inclusion $T \hookrightarrow S^3$. As this sequence is exact \cite[pp.~100--101]{Fomenko}, $k$ is an isomorphism.

We must now prove that for any lift $\tilde{f}$ satisfying the conditions of the lemma, the loop $\tilde{f}|_{\partial D^2}$ is homotopic to the loop $p_*^{-1}(f)|_{\partial D^2}$. This will show that the homotopy class of the boundary loop is independent of the choice of lift (provided it satisfies the conditions of the lemma) and that the mapping $[f] \mapsto [\tilde{f}|_{\partial D^2}]$  is an isomorphism, being a composition of isomorphisms $k \circ p_*^{-1}$.

We construct a homotopic relative ball spheroid for the map $\tilde{f}$.
Let $S_l \subset \partial D^2 \subset \mathbb{C}$ be the left semicircle, and let $d_0 = -1$ be our basepoint. There exists a homotopy $G_t: D^2 \to D^2$ for $t \in [0,1]$ such that:
\begin{itemize}
    \item $G_0 = \text{Id}$.
    \item $G_t(d_0) = d_0$ for all $t \in [0,1]$.
    \item $G_t(\partial D^2) \subset \partial D^2$ for all $t \in [0,1]$.
    \item $G_1(S_l) = \{d_0\}$.
\end{itemize}
Let $F_t = \tilde{f} \circ G_t$. The map $f' = \tilde{f} \circ G_1$ is now a relative ball spheroid, as it maps the arc $S_l$ to the point $\tilde{f}(d_0) = y_0$.
The homotopy $F_t$ preserves the basepoint and maps the boundary into the fiber, $F_t(\partial D^2) \subset T$. Thus, the boundary restrictions $\tilde{f}|_{\partial D^2}$ and $(\tilde{f} \circ G_1)|_{\partial D^2}$ are homotopic loops in $T$.

We have the following sequence of relations:
$$ [\tilde{f}] = [\tilde{f} \circ G_1] $$
Projecting down to $S^2$:
$$[f]= [p \circ \tilde{f}] = [p \circ (\tilde{f} \circ G_1)] \implies [f]= [p \circ f'] $$
And applying $p_{*}^{-1}:$
$$
[p_{*}^{-1}(f) ]=[f' ]=[\tilde{f}]
$$
Which follows:
$$
[p_{*}^{-1}(f)|_{\delta D^{2}}] = [\tilde{f}|_{\delta D^{2}}]
$$
\end{proof}
Now, we can prove the first Theorem:

\begin{proof}[of Theorem \ref{Theorem1}]

We identify the sphere $S^2$ with the Riemann sphere $\overline{\mathbb{C}} = \mathbb{C} \cup \{\infty\}$, 
taking the point $\infty$ as the basepoint in both the domain and codomain of the map $R$.

From the assumption $\lim\limits_{|z|\rightarrow \infty}\frac{f(z)}{g(z)}=\infty$, it follows that such a number $M$ as required by the theorem exists. Consider a homotopy:
$$
G(z,t) = \begin{cases}
\frac{f\left(t\frac{Mz}{|z|}+(1-t)z\right)}{|f\left(t\frac{Mz}{|z|}+(1-t)z\right)|} |f(z)|, & |z| \geq M \\
f(z), & |z| < M
\end{cases}
$$
It is clear that $|G(z,t)| = |f(z)|$ for all $t \in [0,1]$. Moreover, if $|z|\ge M$, then for all $t\in[0,1]$ we have
$$
\left|t\frac{Mz}{|z|}+(1-t)z \right| \geq M.
$$
Therefore, the map $G$ is continuous. Furthermore, when $|z| \geq M$:

$$
\arg(G(z, 1)) = \arg f\left(\frac{z}{|z|}M\right)
$$
Since $|G(z,t)| = |f(z)|$ and this homotopy has no common zeroes with the function $g$ for all $t \in [0,1]$, the next homotopy is also continuous on $S^{2} \times [0,1]$:
$$ \tilde{G}(z,t) = \frac{G(z,t)}{g(z)} $$

Denote
$$
f_1(z) = G(z,1)
$$

Therefore, the degree of the map $R_1 = \frac{f_{1}}{g}$ is equal to the degree of the map $R = \frac{f}{g}$. Without loss of generality, suppose that $\arg(f(M)) = 0$. Consider the Hopf fibration $h: S^3 \to S^{2}$:
$$
h(z_1, z_2) = \begin{cases}
\frac{z_1}{z_2}, & z_2 \neq 0 \\
\infty, & z_2 = 0
\end{cases}
$$
where $|z_1|^2 + |z_2|^2 = 1$.

Let $D^{2}$ denote the plane with a boundary. Take the basepoints $(1,0) \in S^3$ and $\tilde{1} \in \tilde{S}$, 
with $\tilde{S}$ denoting the boundary of the plane. Consider a map $\tilde{R}: \mathbb{C} \to S^3$:

$$
\tilde{R}(z) = \left( \frac{f_1(z)}{\sqrt{|f_1(z)|^2 + |g(z)|^2}}, \frac{g(z)}{\sqrt{|f_1(z)|^2 + |g(z)|^2}} \right)
$$
Let the fiber of the Hopf fibration over the point $\infty$ be denoted by 
$S^1 = \{(z,0) \mid |z|=1\}$. The map $\tilde{R}$ can be extended continuously to the boundary of the plane, $\partial D^{2}$: 

$$
\tilde{R}|_{\partial D^{2}} : \tilde{S} \to S^1, \quad
\tilde{R}|_{\partial D^{2}}(\tilde{z}) =\left( \frac{f_{1}(Mz)}{|f_{1}(Mz)|},0\right) =\left( \frac{f(Mz)}{|f(Mz)|}, 0 \right)
$$
where $\tilde{z} \in \tilde{S}$ corresponds to the point $z$ on the unit circle $|z|=1$. Note that this extended map will be a lifting for the map $R_{1}$, and we use the same notation $\tilde{R}$ for its extension.

Lemma 2 implies that the degree of the map $R_1$ is equal to the degree of the map $\tilde{R}|_{\partial D^{2}}$. 
Moreover, this degree coincides with the index of the loop $\tilde{f} : [0,2\pi] \to \mathbb{C}$ defined by
$$
\tilde{f}(\phi) = f \big( Me^{i\phi} \big).
$$

\end{proof}
\begin{remark}
Below we provide a brief outline of an alternative proof of Theorem 1, demonstrating how the general case can be reduced to a special case with additional assumptions.

We first assume that the functions $f$ and $g$ are smooth, with their graphs transversal to the zero section, and that the zero set of $g$ is bounded. This ensures that the roots of $f$ and $g$ are isolated.

The proof relies on the theorem that the degree of a map is the sum of its \textit{local degrees} at the preimages of a regular value. The local degree at a point $p$ can be computed as the index of the image of a small, positively oriented loop around $p$, provided that the loop does not contain any other preimages.

In this special case, the degree of the map $R$ is given by the sum of local degrees:
$$
\deg(R) = \sum_{p \in R^{-1}(q)} \deg_p(R) = \deg_\infty(R) + \sum_{p \ne \infty}\deg_p(R)
$$
Here, the local degree at infinity is $\deg_\infty(R) = \text{ind}_{\infty}(f) - \text{ind}_{\infty}(g)$, while for any other preimage $p$, the local degree is $\deg_p(R) = \text{ind}_p(g)$. Thus, by the argument principle, the sum of local degrees of $g$ is $\text{ind}_{\infty}(g)$, leading to:
$$
\deg(R) = (\text{ind}_{\infty}(f)-\text{ind}_{\infty}(g)) + \text{ind}_{\infty}(g) = \text{ind}_{\infty}(f).
$$
If the zero set of $g$ is unbounded, one can reduce to the bounded case by choosing a sufficiently large disk whose complement is mapped by $R$ into a small neighborhood of infinity, with $g$ non-vanishing on the boundary, and then collapsing the complement and the neighborhood of infinity to points. This yields a new map of the same degree, with a finite set of zeros of $g$.

The general case can be reduced to this special case, provided that any continuous function can be approximated by a smooth function whose graph is transversal to the zero section.

\end{remark}

\begin{proof}[of Theorem \ref{Theorem2}]

The function $T$ has no zeros on the unit circle. Suppose, for contradiction, that $T(e^{i\alpha_0}) = 0$ for some $\alpha_0 \in [0, 2\pi]$. Since $T$ is homogeneous, we must have $T(z) = 0$ for all $z$ on the ray through $e^{i\alpha_0}$. This contradicts the existence of an infinite limit of $T$ at infinity.
Therefore, the minimum of the modulus of $T$ on the unit circle is greater than $0$. It then follows from the homogeneity of $T$ that there exists a sufficiently large number $M > 0$ such that for all $|z| > M$, the function $f(z)$ has no zeros, and
$$
\left|T(z)\right| > \left|f(z)-T(z)\right|.
$$
 It follows from Theorem \ref{Theorem1} that the degree of the function $R$ is equal to 
$\mathrm{ind}_0(\tilde{f})$, where

$$
\tilde{f} : [0,2\pi] \to \mathbb{C}\setminus \{0\}, \qquad \tilde{f}(\alpha) = f(Me^{i\alpha}).
$$

But since $|T(z)| > |f(z)-T(z)|$:
$$ \text{ind}_{0}(\tilde{f}) = \text{ind}_{0}(T(Me^{i\alpha})) $$

Due to the homogeneity of $T$ and the fact that the winding number is invariant under scaling by a non-zero constant, we have:
$$
\text{ind}_{0}(T(Me^{i\alpha}))=\text{ind}_{0}(M^{d}T(e^{i\alpha})) =\text{ind}_{0}(T(e^{i\alpha}))
$$

By the definition of $\tilde{T}$,
$$ T(e^{i\alpha}) = e^{-id\alpha} \tilde{T}(e^{i\alpha}). $$
Applying the property that the index of a product of curves is the sum of their indices, we obtain:
$$ \text{ind}_{0}(\tilde{f}) = 
 \text{ind}_{0}(e^{-id\alpha}\tilde{T}(e^{i\alpha}))=\text{ind}_{0}(e^{-id\alpha})+\text{ind}_{0}(\tilde{T}(e^{i \alpha}))=-d+\text{ind}_{0}(\tilde{T}(e^{i \alpha}))
 $$
Then, by the argument principle, the index of the curve $\tilde{f}$ is equal to the number of roots of the polynomial $\tilde{T}$ in the open unit disk, counted with multiplicity, minus $d$.

\end{proof}

\section{Examples of Computation}
\label{sec:computation}
In all these examples, we want to compute the degree of the map $R = \frac{f}{g}$, where $f$ and $g$ are polynomials in $z$ and $\bar{z}$, with
$$
d = \max(\deg f, \deg g),
$$
and define $T(z,\bar{z})$ as the homogeneous polynomial of degree $d$ formed by the highest-degree terms of $f$ or $g$, whichever has the larger degree. We then define $\tilde{T}(z) = z^d T(z, z^{-1})$.
\\
\\
1) Let $f(z)=\sum_{t=0}^{k}a_{t}z^{t}$ and $g(z)=\sum_{t=0}^{m}b_{t}z^{t}$ with $m < k$, where $a_k \neq 0$ and $b_m \neq 0$. We assume that $f$ and $g$ have no common roots. In this case, the homogeneous polynomial of highest degree is $T(z)=a_k z^k$. The associated polynomial is $\tilde{T}(z)=a_k z^{2k}$, which has $2k$ roots with modulus less than 1 (all at $x=0$), counted with multiplicity. The degree of the map is therefore $2k-\deg(T)=2k-k=k$.

\medskip

2) Let $f(z)=\sum_{t=0}^{k}a_{t}\overline{z}^{t}$ and $g(z)=\sum_{t=0}^{m}b_{t}\overline{z}^{t}$ with $k < m$, where $a_k \neq 0$ and $b_m \neq 0$. We assume that $f$ and $g$ have no common roots.
In this case, the homogeneous polynomial of highest degree is $T(z)=b_m \overline{z}^{m}$. The associated polynomial is $\tilde{T}(z)=b_m$. Since $\tilde{T}$ is a nonzero constant, it has no roots with modulus less than 1. Hence, the degree of the map $R$ is 
\[
0 - \deg(T) = -m.
\]

\medskip

3) Let $f(z)=z\overline{z}^{4}+z\overline{z}^{2}+3$ and $g(z)=z^{3}\overline{z}+z$.
It is easy to check that $f(z)$ and $g(z)$ have no common zeros.
The homogeneous polynomial of highest degree is $T(z) = z\overline{z}^{4}$. The associated polynomial is $\tilde{T}(z) = z^{2}$. The polynomial $\tilde{T}$ has 2 roots with modulus less than 1, counted with multiplicity. Therefore, the degree of the map $R$ is
$$
\deg(R) = 2 - \deg(T) = 2 - 5 = -3.
$$

\medskip
4) Let $f(z)= z^{2}\bar{z}^{3}+2z^{4}\bar{z}+3z^{2}+2$ and $g(z)= 3z^{3}+\bar{z}$. We will show that $f$ and $g$ have no common roots. Assume for the sake of contradiction that there exists a common root $z \in \mathbb{C}$ such that
$$
\begin{cases}
z^{2}\bar{z}^{3}+2z^{4}\bar{z}+3z^{2}+2=0 \\
3z^{3}+\bar{z}=0
\end{cases}
$$
From the second equation, we have 

$$\bar{z} = -3z^3$$
Note that $z \neq 0$. Taking the modulus of both sides gives us $$|z| = 3|z|^3$$
which implies $|z| = \frac{1}{\sqrt{3}}$.
From the first equation, we have  $$|z^{2}\bar{z}^{3}+2z^{4}\bar{z}+3z^{2}| = 2$$
However, by the triangle inequality,
$$|z^{2}\bar{z}^{3}+2z^{4}\bar{z}+3z^{2}| \leq  3\cdot 3^{-5/2}+3 \cdot 3^{-1/2}< 2$$

The homogeneous polynomial of highest degree is $$T(z,\bar{z})=z^{2}\bar{z}^{3}+2z^{4}\bar{z}$$
This polynomial tends to infinity as $|z| \rightarrow \infty$.

The associated polynomial is 
$$\tilde{T}(z)=z^{4}+2z^{8}=z^{4}(1+2z^{4})$$ 
The polynomial $\tilde{T}$ has 8 roots inside the unit circle. Therefore, the degree of the map $R$ is $8-5=3$.

\medskip

5) Let $f(z) = z^{3} + \overline{z}^{3} + z$ and $g(z)=1$.
In this case, the homogeneous polynomial of highest degree is $T(z)=z^3+\overline{z}^3$. The limit $\lim_{|z|\to\infty} T(z)$ does not exist. Nevertheless, $\lim_{|z|\to\infty} R(z) = \infty$.
It is easy to check that for $|z| \geq 1$, $R(z) \neq 0$. The degree of $R$ is therefore equal to the winding number of the curve $\tilde{f}$:
$$\tilde{f}:[0,2 \pi] \rightarrow \mathbb{C}\setminus\{0\}, \qquad \tilde{f}(\phi) =f(e^{i\phi}).$$
By the formula for the winding number,
$$\text{ind}_{0}(\tilde{f}) = \frac{1}{2\pi i}\oint_{\tilde{f}}\frac{dw}{w} = \frac{1}{2\pi i}\int_{0}^{2\pi}\frac{(3ie^{3it}-3ie^{-3it}+ie^{it})dt}{e^{3it}+e^{-3it}+e^{it}} = 1.$$

\printbibliography

@article{GarciaZapata2014,
  author  = {Garc{\'i}a Zapata, J. L. and D{\'i}az Mart{\'i}n, J. C.},
  title   = {Finding the number of roots of a polynomial in a plane region using the winding number},
  journal = {Computers and Mathematics with Applications},
  year    = {2014},
  volume  = {67},
  number  = {2},
  pages   = {555-568}
}

@article{KhovanskiiBurda2008,
  author  = {Khovanskii, A. and Burda, Y.},
  title   = {Degree of rational mappings, and the theorems of Sturm and Tarski},
  journal = {Journal of Fixed Point Theory and Applications},
  year    = {2008},
  volume  = {3},
  number  = {1},
  pages   = {79--93}
}

@book{Fomenko,
  author    = {Fomenko, A. and Fuchs, D.},
  title     = {Homotopical Topology},
  publisher = {Springer},
  year      = {2016}
}

\end{document}